\pgfplotsset{compat=1.18}
\newtheorem{theorem}{Theorem}[section]
\newtheorem{prop}[theorem]{Proposition}
\newtheorem{lemma}[theorem]{Lemma}
\theoremstyle{remark}
\newtheorem*{remark}{Remark}
\newcommand{\ii}[1]{\llbracket{#1}\rrbracket}
\newcommand{\bE}{\mathbb E}
\newcommand{\bN}{\mathbb N}
\newcommand{\N}{\mathbb N}
\newcommand{\bP}{\mathbb P}
\newcommand{\bR}{\mathbb R}
\newcommand{\bZ}{\mathbb Z}
\newcommand{\Z}{\mathbb Z}
\newcommand{\Instr}{\mathsf{Instr}}
\newcommand{\Left}{\texttt{Left}}
\newcommand{\Right}{\texttt{Right}}
\newcommand{\Sleep}{\texttt{Sleep}}
\newcommand{\s}{\mathfrak{s}}
\newcommand{\idla}{\omega_{\mathrm{IDLA}}}
\newcommand{\ind}{\mathbbm{1}}
\newcommand{\pss}{\bP_n}
\newcommand{\rhoPS}{\rho_{\mathrm{PS}}}
\newcommand{\rhoDD}{\rho_{\mathrm{DD}}}
\newcommand{\cC}{\mathcal C}
\newcommand{\eps}{\varepsilon}
\newcommand{\crit}{\rho_c}
\newcommand{\eqd}{\overset{d}{=}}
\newcommand{\cz}{\cC_n^{(0)}}
\newcommand{\co}{\cC_n^{(1)}}
\newcommand{\pnk}{P_n(k)}
\DeclareMathOperator{\TV}{TV}
\DeclareMathOperator{\Stab}{Stab}
\begin{document}

\title{Local density of activated random walk on $\Z$}

\author[Christopher~Hoffman]{Christopher~Hoffman}
\address{Department of Mathematics, University of Washington, Seattle, WA 98195}
\email{hoffman@math.washington.edu}

\author[Jacob~Richey]{Jacob~Richey}
\address{Alfréd Rényi Institute of Mathematics, Budapest, Hungary}
\email{jrichey@renyi.hu}

\author[Hyojeong~Son]{Hyojeong~Son}
\address{Department of Mathematics, University of Washington, Seattle, WA 98195}
\email{hjson@math.washington.edu}

\begin{abstract}
We consider one-dimensional activated random walk (ARW) on $\bZ$ started from a `point source' initial condition, with many particles at the origin and no other particles. We prove that, uniformly throughout a macroscopic window around the source, the probability that a site contains a sleeping particle after the configuration is stabilized is approximately the critical density. This represents a first step towards understanding the local structure of the critical stationary measure for ARW. 
\end{abstract}

\maketitle

\section{Introduction}

Many complex, real-world systems share a common pattern: they accumulate energy gradually and release it in sudden, cascading bursts, which are statistically scale-free. This phenomenon arises in earthquake dynamics, where stress accumulates progressively along geological fault lines before dissipating abruptly through seismic events of varying magnitudes, from negligible tremors to catastrophic ruptures. Similarly, financial markets display this behavior through price fluctuations in stocks and commodities. These observations led Bak, Tang, and Wiesenfeld in 1987 to introduce the unifying notion of \textit{self-organized criticality} (SOC) \cite{BakTangWiesenfeld87}. The defining characteristic of SOC is that the system is autonomously driven toward a critical state without fine-tuning of parameters or dependence on the initial condition. The prevalence of this mechanism across both artificial and natural phenomena underscores its importance as a universal framework for understanding the behavior of complex systems.

Since then, many mathematical toy models have been introduced as candidates to capture and rigorously define SOC. One such model is \textit{activated random walk} (ARW). ARW is an interacting particle system on $\bZ^d$ whose particles are either active or sleeping. Active particles perform independent continuous-time simple random walks at rate $1$ and attempt to fall asleep at rate $\lambda\in(0,\infty)$; an attempt succeeds only if the particle is alone at its site. Sleeping particles do not move and are instantaneously reactivated when an active particle arrives. Starting from finitely many particles, the dynamics stabilize almost surely in finite time.

ARW shares many common features with its spiritual predecessor, the abelian sandpile model, specifically its abelian property. From a modeling and technical perspective, ARW has two important advantages over the abelian sandpile model: first, the fact that individual particles perform random walks and become sleeping through independent mechanisms; and second, that ARW is expected to exhibit SOC in a robust way, while the abelian sandpile lacks some of the universality properties associated with SOC \cite{FeyLevineWilson10,Levine15}.

There are several natural notions of criticality for ARW, including the fixed-energy, driven-dissipative, cycle, and point-source formulations. Hoffman, Johnson, and Junge proved that these four definitions agree in $d=1$ \cite{HoffmanJohnsonJunge24}. We write $\crit\in(0,1)$ for this common critical density. We next recall the point-source and driven-dissipative critical densities in the formulation used in \cite{HoffmanJohnsonJunge24}.

\medskip
\noindent\textbf{Point-source model.}
Start ARW on $\bZ$ from the initial condition $n\delta_0$, consisting of $n$ active particles at the origin and no other particles, and let $\eta^{(n)}=\Stab(n\delta_0)\in\{0,\s\}^{\bZ}$ be the final stabilized configuration (we define the stabilization operator $\Stab$ in Section~\ref{sec:sitewise-ARW}). Let $L_n=L_n(\eta^{(n)})$ be the number of sites in the smallest integer interval containing all sleeping particles in $\eta^{(n)}$, and define the point-source critical density
\[
\rhoPS \;:=\; \lim_{n\to\infty}\bE\!\left[\frac{n}{L_n}\right],
\]
whenever the limit exists.

\medskip
\noindent\textbf{Driven-dissipative model.}
Fix $m\in\bN$ and consider ARW on the finite interval $\ii{0,m}:=\{0,1,\dots,m\}$ with sinks at $-1$ and $m+1$. Consider the Markov chain on stable configurations in which, at each step, one adds a single active particle and then stabilizes until only sleeping particles remain in $\ii{0,m}$. The active particle may be added uniformly at random in $\ii{0,m}$ or at a fixed site; this choice does not affect the stationary distribution of the chain \cite{levine2021exact}. Let $\pi_m^{\mathrm{DD}}$ be the stationary distribution. If $\eta\sim\pi_m^{\mathrm{DD}}$, let
\[
N_m(\eta)\;:=\;\sum_{x=0}^m \mathbf 1_{\{\eta(x)=\s\}}
\]
be the number of sleeping particles in $\ii{0,m}$, and define
\[
\rhoDD \;:=\; \lim_{m\to\infty}\frac{1}{m+1}\,\bE_{\pi_m^{\mathrm{DD}}}[N_m],
\]
whenever the limit exists.

By \cite{HoffmanJohnsonJunge24}, these limits exist and coincide, $\rhoPS=\rhoDD=\crit$. We use the point-source formulation in the statement of our main theorem, while the driven-dissipative viewpoint enters in Section~\ref{sec:block}.

So far, little progress has been made on a key aspect of ARW: the existence of a critical stationary distribution in the infinite-volume setting which exhibits the hallmarks of SOC. Depending on the choice of context, this critical measure arises in a few different possible forms, but the same critical object, which can be thought of as a probability measure on particle configurations, should be visible in each one. The limit measure is expected to exhibit the statistical features which constitute a definition of SOC, namely multi-scale power laws for correlation decays and avalanche sizes, and hyperuniformity, which is a reduction in the variance of the number of particles in a large region compared to independent randomness.

One way to define the critical measure, which is relevant for our main result, is presented by Levine and Silvestri in \cite[Section 2.3]{LevineSilvestriUniversality24}. There, it is conjectured to arise as the microscopic limit of ARW started from a large \emph{point-source} initial condition. Namely, for each $n$ let $\pss$ denote the law (on the space of sleeping particle configurations $\{0,\s\}^{\mathbb{Z}^d}$) of the final sleeping configuration of ARW on $\mathbb Z^d$ started from $n$ active particles at the origin. It is conjectured in \cite{LevineSilvestriUniversality24} that $\pss$ converges locally, in the sense that for every finite set $V\subset\mathbb Z^d$ the restrictions $\pss|_V$ converge as $n\to\infty$, and that the resulting limiting measure $\alpha$ is translation invariant and has a well-defined particle density $\rho_c(d,\lambda)$. Alternatively, the measures $\pi_m^{\mathrm{DD}}$ should converge locally to the same limit $\alpha$ as $m \to \infty$. 

For the abelian sandpile model on $\bZ^d$ with $d\geq 2$, an infinite-volume limit of the stationary measure exists \cite{AthreyaJarai04}. Local probabilities under this limiting measure can sometimes be computed explicitly, especially in two dimensions where the burning bijection relates sandpiles to spanning trees. For example, exact height probabilities on $\bZ^2$ (that is, the probability that the stable configuration has a given number of particles at the origin) are available in special cases \cite{Priezzhev94}. In high dimensions, the single-site height distribution admits a Poisson-type asymptotic as $d\to\infty$ \cite{JaraiSun21}, and related computations have been carried out on self-similar graphs including the Sierpi\'nski gasket \cite{HeizmannKaiserSavaHuss25}. These spanning tree techniques are not available for ARW, so different methods are needed.

Our main result identifies the single-site occupation probability of any subsequential local limit measure for point-source ARW. Concretely, for any site $i$ not too far from the source, the probability that $i$ contains a sleeping particle in the final configuration is approximately $\crit$.

\begin{theorem}\label{thm:key}
Fix $\eps>0$. For any sequence of sites $i(n)\in \mathbb Z$ with
\[
|i(n)|\leq \frac{1}{2}\bigl(1-\eps\bigr)n,
\]
we have
\[
\pss\bigl(\eta(i(n))=\s\bigr) = \crit + o(1)\quad \text{as } n\to\infty,
\]
where $\eta\sim\pss$ is the final sleeping configuration of point-source ARW started from $n\delta_0$.
\end{theorem}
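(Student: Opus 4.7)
My strategy is to compare the law of $\eta^{(n)}\sim\pss$ in a mesoscopic window around $i(n)$ to the driven--dissipative stationary measure on a window of the same size, and then apply the identification $\rhoDD=\crit$ from \cite{HoffmanJohnsonJunge24}. The hypothesis $|i(n)|\leq\tfrac{1}{2}(1-\eps)n < n/(2\crit)$, together with $L_n=n/\crit+o(n)$, places $i(n)$ macroscopically inside the support of $\eta^{(n)}$, with macroscopic slack on each side.

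The plan has three steps. In \emph{Step 1}, I would fix a mesoscopic window $W_n=[i(n)-R_n,\,i(n)+R_n]$ with $R_n\to\infty$ much slower than $n$. Using the abelian property and the sitewise (toppling) construction, I would stabilize in two phases: first execute every toppling that occurs outside $W_n$, producing a random number of active particles queued at each of the two sites adjacent to $W_n$; then stabilize inside $W_n$ treating the outside as sinks. Abelianness ensures this produces the same configuration (in distribution) on $W_n$ as the one-shot stabilization of $n\delta_0$. In \emph{Step 2}, I would show that the flux of particles across each boundary edge of $W_n$ is unbounded as $n\to\infty$. The key input is a lower bound on the odometer at the boundary of $W_n$, which should follow from the fact that $i(n)$ is macroscopically inside the support of $\eta^{(n)}$, combined with the known macroscopic shape of the point-source process. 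In \emph{Step 3}, with an unboundedly large driving flux, the dynamics inside $W_n$ --- now seen as a driven--dissipative chain with sinks just outside $W_n$ --- should be close in total variation to the stationary measure $\pi_{2R_n+1}^{\mathrm{DD}}$. Invoking $\rhoDD=\crit$ together with translation invariance of the bulk of $\pi_m^{\mathrm{DD}}$ then yields $\pss(\eta(i(n))=\s)=\crit+o(1)$.

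The main obstacle is Step 3. The point source delivers particles to the boundary of $W_n$ in a complicated, correlated, generally asymmetric fashion, so the ``driver'' of the induced driven--dissipative chain on $W_n$ differs from the i.i.d.\ uniform driver under which stationarity is usually established. I would expect to handle this by applying abelianness once more to reorder driver particles and decouple them from the interior, and by using a sparse/plentiful dichotomy à la Rolla--Sidoravicius to give quantitative control on both the odometer and the flux. A secondary technical obstacle is promoting the statement $\rhoDD=\crit$ (which is about the spatial average of the density) to a statement about the single-site marginal at $i(n)$ under $\pi_{2R_n+1}^{\mathrm{DD}}$; this should reduce to a bulk translation-invariance estimate for the DD stationary measure plus an $o(R_n)$ boundary-effect bound.
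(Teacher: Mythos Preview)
Your overall architecture---reduce to a driven--dissipative problem on a mesoscopic window and invoke $\rhoDD=\crit$---matches the paper's Section~4. But two genuine ideas are missing, and without them the argument does not close.

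First, the obstacle you call ``secondary'' (promoting the spatial average $\rhoDD=\crit$ to a single-site marginal) is in fact the main one, and the paper solves it by a completely different mechanism that you do not mention. The paper proves a \emph{flatness} estimate
\[
\bigl|\pss(\eta(i)=\s)-\pss(\eta(i+1)=\s)\bigr|\le c\,n^{-1/2}
\]
via a coupling of two IDLA clusters started at adjacent sources (using the Eulerian distribution of the endpoint and its log-concavity). This flatness, combined with the block-average result $\sum_{j\in I_n}\pss(\eta(j)=\s)=(\crit+o(1))|I_n|$ on a window of size $n^\gamma$ with $\gamma<1/2$, immediately gives the single-site statement. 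Your proposed route---bulk translation invariance of $\pi_m^{\mathrm{DD}}$---is not established in the literature and would essentially require proving this same flatness result anyway.

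Second, your Step~3 relies on ``unbounded flux $\Rightarrow$ close to $\pi^{\mathrm{DD}}$ in total variation,'' i.e.\ a mixing-time bound for the driven--dissipative chain with correlated, asymmetric driving. The paper avoids this entirely via an \emph{exact sampling} trick: it first performs an IDLA flattening phase so that (with high probability) every site in the window holds exactly one active particle, and then invokes Theorem~2.1 of \cite{levine2021exact}, which says that stabilizing a fully occupied interval (plus any boundary injections) yields \emph{exactly} the stationary law $\pi^{\mathrm{DD}}$. This removes the need for any mixing argument and makes the block-average estimate go through via concentration bounds from \cite{HoffmanJohnsonJunge24} and a union bound over the possible flux values $(a,b)$. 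Your two-phase decomposition (``topple outside first, queue particles at the boundary'') is also not quite right as stated: the paper instead uses an $I_n$-first rule and tracks the total inbound flux $a^\ast,b^\ast$ over the whole stabilization.
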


\noindent As a consequence of our methods, specifically those in Section~\ref{sec:IDLA}, we obtain that the limiting critical measure, if it exists, is shift invariant.

\begin{prop}\label{cola}
Let $\alpha$ be a probability measure on $\{0,\s\}^{\mathbb Z}$ which is a subsequential limit of the point-source laws $\pss$. Then $\alpha$ is shift invariant.
\end{prop}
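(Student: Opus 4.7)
The plan is to deduce shift invariance of $\alpha$ from the coupling and IDLA-comparison tools that underpin Theorem~\ref{thm:key}. Since the local topology on $\{0,\s\}^\bZ$ is generated by cylinder events, shift invariance is equivalent to the statement that, for every finite window $V\subset\bZ$, every $\sigma\in\{0,\s\}^V$, and every $k\in\bZ$,
\[
\lim_{j\to\infty}\Bigl|\bP_{n_j}(\eta|_V=\sigma)-\bP_{n_j}(\eta|_{V+k}=\sigma_k)\Bigr|=0
\]
along the subsequence $n_j$ defining $\alpha$, where $\sigma_k\in\{0,\s\}^{V+k}$ is the shift $\sigma_k(y):=\sigma(y-k)$. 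Translation invariance of the ARW dynamics gives $\bP_n(\eta|_{V+k}=\sigma_k)=\mu_n^{(-k)}(\eta|_V=\sigma)$, where $\mu_n^{(j)}$ denotes the law of $\Stab(n\delta_j)$, so the problem reduces to proving
\[
\TV\!\bigl(\mu_n^{(0)}|_V,\;\mu_n^{(-k)}|_V\bigr)\xrightarrow[n\to\infty]{}0
\]
for every fixed finite $V\subset\bZ$ and every fixed $k\in\bZ$.

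To establish this, I would couple $\Stab(n\delta_0)$ and $\Stab(n\delta_{-k})$ via the Diaconis--Fulton sitewise representation, driving both by a single collection of instruction stacks $(\mathcal I_x)_{x\in\bZ}$. The two odometers $u^{(0)}$ and $u^{(-k)}$ can then be compared through the least-action principle: shifting the source by a bounded amount alters the odometer only by a localized correction together with a diffusive fluctuation whose scale is controlled by the IDLA-style bounds of Section~\ref{sec:IDLA}. Since the stabilized value at a site $y$ is a deterministic function of the odometer values around $y$ and the common stacks, this odometer comparison translates into a coupling in which the two final configurations agree on a fixed bulk window $V$ with probability $1-o(1)$, yielding the displayed TV bound.

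The main obstacle is that ARW is not monotone in the initial particle configuration -- cascades of reactivations in principle let a single extra particle at the source change the final state at sites arbitrarily far away. The only genuinely available monotonicity is at the level of odometers, not of final configurations, so one has to argue that the set of sites whose final state is sensitive to a bounded shift of the source remains macroscopically close to the source. This is precisely the type of estimate Section~\ref{sec:IDLA} provides, by pinning down the odometer profile up to diffusive error via the IDLA comparison. With this in hand, Proposition~\ref{cola} follows from the reduction above by running the one-site argument behind Theorem~\ref{thm:key} simultaneously on all sites of $V$.
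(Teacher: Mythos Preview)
Your high-level reduction is correct and matches the paper: shift invariance of $\alpha$ follows once you show $\TV(\pss,\theta\pss)\to 0$, and by translation invariance of the stacks this is the same as coupling the point-source laws with sources at $0$ and at $1$. But the concrete mechanism you propose---comparing odometers under a \emph{single} shared stack via least-action, and then arguing that the set of sites where the two final configurations differ is localized near the source---is not what Section~\ref{sec:IDLA} supplies, and you have not actually closed the gap you yourself identify. Least-action gives only monotonicity of odometers, not localization of their difference, and there is no reason under the shared-stack coupling for the two final configurations to agree on a fixed bulk window; a single discrepancy in the IDLA aggregate shifts all subsequent stack reads and can propagate everywhere.

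The paper's argument avoids this entirely by exploiting the two-phase toppling of Section~\ref{IDLA:flatphase}. What Section~\ref{sec:IDLA} actually proves is that the two IDLA clusters $\cC_n^{(0)}$ and $\cC_n^{(1)}$ (i.e.\ the outputs of Phase~1 from sources $0$ and $1$) can be coupled---via the \emph{maximal} coupling of the Eulerian right-endpoint statistic, not via shared stacks---to be \emph{identical as sets} with probability $1-O(n^{-1/2})$. On that event the intermediate configurations $\idla$ are literally the same; one then couples the remaining (unused) instruction stacks to be identical for Phase~2, forcing the two final configurations to coincide everywhere. This gives $\TV(\pss,\theta\pss)\le cn^{-1/2}$ for the full laws, with no need to restrict to a window $V$ or to reason about odometer differences at all. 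The step you are missing is precisely this: use the abstract maximal coupling for Phase~1 rather than a shared-stack coupling, so that with high probability there is nothing left to compare.
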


To summarize, we find that for every fixed site $i\in\mathbb Z$ the one-site marginals $\pss|_{\{i\}}$ converge and the limiting probability that $i$ contains a sleeping particle is $\crit$. In particular, if the full microscopic limit measure $\alpha$ exists, it must be shift-invariant with average particle density $\crit$. These results confirm part of Conjectures 3--5 of~\cite{LevineSilvestriUniversality24}.

\subsection{Outline and Overview}
For $i\in\mathbb Z$, let $S_i:=\{\eta^{(n)}(i)=\s\}$ be the event that site $i$ contains a sleeping particle in the final stabilized configuration. Equivalently, if $\eta\sim\pss$, then $S_i=\{\eta(i)=\s\}$ and
\[
\pss\bigl(\eta(i)=\s\bigr)=\bP(S_i).
\]
The idea of the proof is to show that (1) the function $i\mapsto \pss(\eta(i)=\s)$ is relatively flat, and (2) that over any block of size $\omega(1)$ as $n \to \infty$, the particle density is close to $\crit$. We prove (1) in Section \ref{sec:IDLA} using a coupling between two instances of internal diffusion limited aggregation (IDLA), which can be thought of as ARW with infinite sleep rate. (2) is proved in Section \ref{sec:block} by an application of the machinery of~\cite{HoffmanJohnsonJunge24}, in combination with a semi-artificial toppling sequence involving IDLA on the $\omega(1)$-sized block. The artificial topplings allow us to give an upper bound on the probability that the particle density in the block is atypical. These two uses of IDLA are distinct and different from those in the existing ARW literature. Finally, in Section \ref{sec:complete} we combine (1) and (2) with elementary inequalities to obtain Theorem \ref{thm:key}.

\section{ARW Setup} \label{sec:sitewise-ARW}

We follow the same conventions as most recent work on ARW by using the so-called site-wise (Diaconis--Fulton) construction of the process. Namely, to every site we attach an infinite stack of instructions, and evolve the system by applying the instructions to the particles that arrive at that site. Thanks to the abelian property (Lemma \ref{camellia}), as long as the resulting configuration is stable (i.e. contains only empty sites and sleeping particles), the order in which sites are toppled is irrelevant. We give an abbreviated version of the setup, with particular focus on the `flattening' toppling sequences that will be relevant for our proofs. We refer the reader to~\cite{rolla2020activated} for more details.

\medskip

\noindent\textbf{State space and configurations.}
We take $\mathbb N=\{0,1,2,\dots\}$. Let $\mathbb{N}\cup\{\s\}$ be ordered by $0<\s<1<2<\cdots$. A \emph{configuration} is $\eta\in\{0,\s,1,2,\dots\}^{\bZ}$, with $\eta(x)$ the state at $x\in\bZ$: $\eta(x)=0$ means no particle, $\eta(x)=\s$ means one sleeping particle, and $\eta(x)=n\ge1$ means $n$ active particles. When an active particle arrives at a site containing a sleeping particle, it wakes it, so $\s+1=2$. We set $\lvert \s\rvert=1$ and write $\lvert \eta(x)\rvert$ for the number of particles at $x$. A site $x$ is \emph{stable} if $\eta(x)<1$ and \emph{unstable}
otherwise.

\medskip

\noindent\textbf{Instruction stacks and their execution.}
Let $(\Omega,\mathcal F,\bP)$ be the product space of instruction stacks, under which $(\Instr_x(k))$ are i.i.d. with the following distribution
\[
  \Instr_x(k)=
  \begin{cases}
    \Left  & \text{with prob.\ }\frac{1}{2(1+\lambda)},\\
    \Right & \text{with prob.\ }\frac{1}{2(1+\lambda)},\\
    \Sleep & \text{with prob.\ }\frac{\lambda}{1+\lambda}.
  \end{cases}
\]
A $\Left$ (resp.\ $\Right$) instruction removes $1$ from $\eta(x)$ and adds $1$ to $\eta(x-1)$ (resp.\ $\eta(x+1)$). A $\Sleep$ instruction at a site with exactly one active particle changes $1\mapsto\s$, while it has no effect if there are $k\ge2$ active particles. We write $\bE$ for expectation with respect to $\bP$.

\medskip

\noindent\textbf{Topplings and odometer.}
One should think of each stack $(\Instr_x(k))_{k \in \N^{+}}$ as an infinite roll of train tickets; then particles arrive one at a time at the train station, and are given the next ticket in the roll to read and execute. We call each such execution a {\em{toppling}} at a given site. Given a finite sequence of sites $\kappa=(x_1,\dots,x_\ell)$, we may execute the {\em{toppling sequence}} which topples those sites in order. The \emph{odometer} of a toppling sequence records how many times each site has been toppled:
\[
U_{\kappa}:\bZ\to\mathbb{N},\qquad U_{\kappa}(x)=\text{(\# of topplings at $x$ in $\kappa$).}
\]
If some instructions have already been executed, and $(\eta, U)$ is the current state and partially-executed toppling sequence odometer, the next state and odometer after toppling site $x$ is
\[
\Phi_{x}(\eta,U) = \bigl(\Instr_x(U(x)+1)(\eta),\, U+\delta_x\bigr),
\]
where $\delta_x$ is $1$ at $x$ and $0$ elsewhere. The move $\Phi_x$ is \emph{legal} for $(\eta, U)$ when $x$ is unstable for $\eta$. For a toppling sequence $\kappa=(x_1,\dots,x_\ell)$, write
\[
(\Phi_\kappa(\eta),U_\kappa)\;:=\;\Phi_{x_\ell}\circ\cdots\circ\Phi_{x_1}(\eta,0),
\]
so that $\Phi_\kappa(\eta)$ denotes the resulting configuration. We call a toppling sequence $\kappa=(x_1,\dots,x_\ell)$ \emph{legal} for an initial configuration $\eta$ if for each $j=1,\dots,\ell$, the move $\Phi_{x_j}$ is legal for the state obtained after performing the first $j-1$ topplings of $\kappa$ starting from $(\eta,0)$. We call $\kappa$ \emph{stabilizing} for $\eta$ if the resulting configuration $\Phi_\kappa(\eta)$ is stable (i.e., every site is stable). The abelian property shows that all legal stabilizing schemes are equivalent:

\begin{lemma}[Abelian Property {\cite{rolla2020activated}}]\label{camellia}
Let $\kappa$ and $\beta$ be legal, stabilizing toppling sequences for a configuration $\eta$. Then $U_\kappa(x)=U_\beta(x)$ for every $x\in\bZ$, and in particular,
\[
\Phi_\kappa(\eta)=\Phi_\beta(\eta).
\]
\end{lemma}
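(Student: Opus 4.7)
The plan is the classical Diaconis--Fulton argument: a local commutation lemma for pairs of unstable sites, upgraded to a global rearrangement property, then a least-action principle, applied in both directions.

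\textbf{Step 1 (Local commutation).} I would show that if $(\eta, U)$ has two distinct unstable sites $x \neq y$, then both compositions $\Phi_y \circ \Phi_x$ and $\Phi_x \circ \Phi_y$ are legal at $(\eta, U)$ and produce the same state $(\eta', U + \delta_x + \delta_y)$. For $|x - y| \geq 2$ this is immediate, since the two topplings affect disjoint neighborhoods and consume different stack entries. For $|x - y| = 1$ I would verify by hand all nine combinations of $\Instr_x(U(x) + 1), \Instr_y(U(y) + 1) \in \{\Left, \Right, \Sleep\}$. The crucial facts are (i) a $\Sleep$ instruction at a site holding $\geq 2$ active particles is a no-op, which handles the cases where one toppling deposits a particle at the other site just before that site's $\Sleep$ is read, and (ii) the convention $\s + 1 = 2$ for a particle arriving at a sleeping site reconciles the two orderings when one site executes $\Sleep$ and the other executes $\Left$ or $\Right$ pointing at it.

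\textbf{Step 2 (Rearrangement, least action, and conclusion).} Iterating Step 1 via bubble-sort swaps, any two legal toppling sequences that topple the same multiset of sites from $(\eta, 0)$ produce the same odometer and final configuration. Next, I would prove by induction on $|\kappa|$ a \emph{least action principle}: whenever $\kappa$ is a legal sequence and $\beta$ is a legal stabilizing sequence for $\eta$, one has $U_\kappa(x) \leq U_\beta(x)$ for every $x \in \bZ$. Indeed, writing $\kappa = (x_1, \kappa')$, the unstable site $x_1$ must appear in $\beta$ (otherwise $\beta$ would leave $x_1$ unstable), so Step 1 lets me commute the first occurrence of $x_1$ to the front and write $\beta$ as an odometer-equivalent sequence $(x_1, \beta'')$, where $\beta''$ is legal and stabilizing for $\Phi_{x_1}(\eta)$; the inductive hypothesis applied to $\kappa'$ and $\beta''$ then yields $U_{\kappa'} \leq U_{\beta''}$, hence $U_\kappa \leq U_\beta$. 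Applying this in both directions to two legal stabilizing sequences $\kappa, \beta$ gives $U_\kappa = U_\beta$, so $\kappa$ and $\beta$ topple the same multiset of sites, and the rearrangement statement yields $\Phi_\kappa(\eta) = \Phi_\beta(\eta)$.

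The main obstacle is the adjacent-site case analysis in Step 1, where one must track configurations in which a site transits through $\{1, \s, 2\}$ during the two topplings; the verification is mechanical but must be done carefully because of the asymmetric way $\Sleep$ interacts with the particle count. The subsequent steps are then purely combinatorial.
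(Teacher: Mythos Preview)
The paper does not give its own proof of this lemma; it is stated with a citation to Rolla's survey \cite{rolla2020activated} and then used as a black box. Your outline is the standard Diaconis--Fulton local-commutation plus least-action argument, which is precisely the proof one finds in that reference, and it is correct as sketched. One small point worth making explicit in Step~2: when you commute the first occurrence of $x_1$ in $\beta$ to the front, each bubble-sort swap requires $x_1$ to already be unstable at the intermediate state; this holds because none of the preceding topplings $y_1,\dots,y_{j-1}$ touch $x_1$, and toppling any site other than $x_1$ can only increase (never decrease) the value at $x_1$ in the ordering $0<\s<1<2<\cdots$. With that monotonicity noted, your argument goes through.
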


Fix a realization of the instruction stacks $(\Instr_x(k))$. For a configuration $\eta$ which admits a finite legal stabilizing toppling sequence, define the \emph{stabilization} of $\eta$ by
\[
\Stab(\eta)\;:=\;\Phi_\kappa(\eta),
\]
where $\kappa$ is any legal stabilizing toppling sequence for $\eta$. By Lemma~\ref{camellia}, $\Stab(\eta)$ does not depend on the choice of $\kappa$. We similarly write
\[
u(\eta)\;:=\;U_\kappa
\]
for the associated (well-defined) odometer function. For a finite set $V$, we write $\Stab_V(\eta)$ for the stabilization of the configuration $\eta$ when legal topplings are allowed only at sites in $V$, and sites in $V^{\mathrm c}$ are treated as sinks (equivalently, particles that move outside $V$ are removed (ignored)). We write $u_V(\eta,x)$ for the associated odometer, that is, the number of topplings performed at site $x \in V$ during the stabilization of $\eta$ in $V$ (and set $u_V(\eta,x)=0$ for $x \notin V$). We reserve $\Stab(\cdot)$ (without a subscript) for the global stabilization map defined earlier; finite-volume stabilizations always carry the subscript $V$.

\subsection{Point-source ARW}
Fix a positive integer $n$, and consider the initial condition $n\delta_0$ consisting of $n$ active particles at the origin. Let
\[
\eta^{(n)} \;:=\; \Stab(n\delta_0)\in\{0,\s\}^{\bZ}
\]
be the resulting stabilized configuration. All randomness in this paper comes from the i.i.d.\ instruction stacks, and we work on the stack space $(\Omega,\mathcal F,\bP)$ introduced earlier. For each site $i\in\bZ$, define the event
\[
S_i \;:=\; \{\eta^{(n)}(i)=\s\}\in\mathcal F,
\]
that site $i$ contains a sleeping particle after stabilization. Its probability is $\bP(S_i)$. When it is convenient to view $\eta^{(n)}$ as a random element of $\{0,\s\}^{\bZ}$, we denote by
\[
\pss \;:=\; \bP\circ(\eta^{(n)})^{-1}
\]
the induced law on final sleeping configurations.

\subsection{IDLA Toppling Sequence}\label{IDLA:flatphase}

We will use a specific type of toppling sequence, which is legal and stabilizing, in Sections \ref{sec:IDLA} and \ref{sec:block}. The toppling sequence works in two phases: a {\em{flattening}} phase, where particles perform IDLA steps, and then a second phase, where particles can be toppled in any legal order until the configuration is stable. For the sake of definiteness, the flattening phase evolves in stages by always toppling the leftmost site $x$ where $|\eta(x)| \geq 2$, so this phase ends when only sites with $|\eta(x)| \leq 1$ remain. Note that during this phase, any sleep instruction encountered is effectively voided, since the particle which executed the sleep is at a site with at least two particles. Thus, when started from the point-source initial condition $n\delta_0$ on $\bZ$, the flattening phase is equivalent to performing IDLA (aka ARW with infinite sleep rate), except that it results in an interval of length $n$ with each site containing a single active particle (whereas IDLA would leave a single sleeping particle at each site). In particular, by Lemma \ref{camellia} for ARW with $\lambda = \infty$, all legal choices of flattening procedure yield the same intermediate configuration $\idla$ obtained at the end of the flattening phase.


\section{Coupling two one-dimensional IDLA clusters}\label{sec:IDLA}

We now prove a coupling result for two IDLAs. (Recall that IDLA is just ARW with $\lambda = \infty$, i.e.\ where particles perform independent simple random walks, and fall asleep instantly whenever they are alone.) Let $\cC_n^{(s)}$ denote the IDLA cluster obtained by releasing $n$ particles from the site $s\in\{0,1\}$ and performing IDLA until all particles are asleep. We will construct an explicit coupling of the two clusters $\cC_n^{(0)}$ and $\cC_n^{(1)}$ so that they coincide with high probability as $n \to \infty$.

Let $K_n^{(s)}$ denote the number of occupied sites strictly to the right of $s$ in $\cC_n^{(s)}$ after stabilization. Note that the random variable $K_n^{(s)}$ completely determines $\cC_n^{(s)}$ (for each $s \in \{0,1\}$, respectively) because the IDLA cluster is always a connected interval in $\bZ$. It follows that the two clusters $\cC_n^{(0)}$ and $\cC_n^{(1)}$ coincide if and only if
\[
K_n^{(0)}=K_n^{(1)}+1.
\]
Thus, to show the desired coupling, it is equivalent to construct a coupling of the laws $K_n^{(0)}$ and $K_n^{(1)}$ for which $K_n^{(0)}=K_n^{(1)}+1$ holds with high probability.

Let $D_n$ denote the number of descents of a uniformly random permutation $\sigma\in S_n$, that is
\[
D_n=\#\{1\leq i\leq n-1:\sigma(i)>\sigma(i+1)\}.
\]
The following result of Mittelstaedt gives an exact description of the law of $K_n^{(s)}$ via the Eulerian distribution of descents $D_n$.

\begin{theorem}[Theorem 1 of \cite{Mittelstaedt19}]\label{thm:eulerian}
For each source $s\in\{0,1\}$,
\[
K_n^{(s)}\eqd D_n,\qquad
P_n(k):=\frac{\langle n,k\rangle}{n!},\quad 0\le k\le n-1,
\]
where $\langle n,k\rangle$ is the $k$th Eulerian number.
\end{theorem}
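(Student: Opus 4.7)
\medskip
\noindent\textbf{Proof plan for Theorem~\ref{thm:eulerian}.}
My approach is to reduce both cases $s\in\{0,1\}$ to a single Markov chain calculation, and then identify the one-step transitions with the classical descent-insertion recursion for uniform permutations. First, because the instruction stacks $(\Instr_x(k))$ are i.i.d.\ over $x\in\bZ$, the law of $\cC_n^{(1)}$ is the shift by $1$ of the law of $\cC_n^{(0)}$; in particular $K_n^{(0)}\eqd K_n^{(1)}$, so it suffices to treat $s=0$. By the abelian property (Lemma~\ref{camellia}), applied at $\lambda=\infty$, I may release the $n$ particles sequentially from the origin, letting each walker perform simple random walk until it arrives at a site not yet in the cluster, at which point it settles. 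After $i-1$ particles have settled the cluster is an interval $[-L_{i-1},R_{i-1}]$ with $L_{i-1}+R_{i-1}+1=i-1$, so $L_{i-1}=i-2-R_{i-1}$ is determined by $R_{i-1}$ and the process $(R_i)_{i\geq 1}$ is a time-inhomogeneous Markov chain, with $K_n^{(0)}=R_n$.

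Second, I compute the transitions of $R_i$. Conditional on $R_{i-1}$, the $i$-th particle performs a simple random walk from $0$ in $[-L_{i-1}-1,R_{i-1}+1]$ until it first hits an endpoint. The gambler's ruin formula yields
\[
\bP\bigl[R_i=R_{i-1}+1 \,\big|\, R_{i-1}\bigr]=\frac{L_{i-1}+1}{L_{i-1}+R_{i-1}+2}=\frac{i-1-R_{i-1}}{i},
\]
and correspondingly $\bP[R_i=R_{i-1} \mid R_{i-1}]=(R_{i-1}+1)/i$, with initial value $R_1=0$.

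Third, I compare this with the classical sequential construction of a uniform permutation: build $\sigma^{(i)}\in S_i$ from $\sigma^{(i-1)}\in S_{i-1}$ by inserting $i$ into a uniformly chosen one of the $i$ gaps; this produces a uniform element of $S_i$, and iterating yields a uniform $\sigma^{(n)}\in S_n$. A standard case check on the four types of gaps (leftmost, rightmost, into an existing ascent, into an existing descent) shows that, given the current descent count $d_{i-1}=d$, exactly $d+1$ of the $i$ gaps leave the descent count unchanged, and the remaining $i-1-d$ gaps increase it by one. Hence the descent process $(d_i)_{i\geq 1}$ is a Markov chain with transitions
\[
\bP[d_i=d_{i-1}+1\mid d_{i-1}]=\frac{i-1-d_{i-1}}{i},\qquad \bP[d_i=d_{i-1}\mid d_{i-1}]=\frac{d_{i-1}+1}{i},
\]
and $d_1=0$. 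These agree exactly with the transitions of $(R_i)$, and the initial conditions match, so by induction on $i$ we get $R_n\eqd d_n\eqd D_n$, whence $K_n^{(s)}\eqd D_n$ with $P_n(k)=\langle n,k\rangle/n!$.

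The proof is not technically difficult; the one non-routine step is recognizing that the gambler's-ruin weight $(i-1-R_{i-1})/i$ is identical to the combinatorial weight $(i-1-d)/i$ in the Eulerian recursion $\langle i,k\rangle=(k+1)\langle i-1,k\rangle+(i-k)\langle i-1,k-1\rangle$. Everything else—the abelian property, translation invariance of the stacks, and the insertion construction of uniform permutations—is classical, so the main obstacle is simply to place the two processes in a common framework in which their transitions can be read off and matched.
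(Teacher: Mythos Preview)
The paper does not give its own proof of Theorem~\ref{thm:eulerian}; it is quoted verbatim as Theorem~1 of \cite{Mittelstaedt19} and used as a black box. So there is no in-paper argument to compare against.

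That said, your argument is correct and self-contained. The reduction to $s=0$ by translation invariance of the stacks is fine, and the sequential-release formulation is justified by the abelian property at $\lambda=\infty$. Your gambler's-ruin computation is right: with the cluster $[-L_{i-1},R_{i-1}]$ of size $i-1$, the $i$-th walker from $0$ exits on the right with probability $(L_{i-1}+1)/(L_{i-1}+R_{i-1}+2)=(i-1-R_{i-1})/i$. On the permutation side, the insertion-of-the-maximum analysis is also correct: inserting $i$ into the rightmost gap or into any of the $d$ existing descents preserves the descent count (that is $d+1$ gaps), while inserting at the leftmost gap or into any of the $i-2-d$ ascents increases it by one (that is $i-1-d$ gaps). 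The two Markov chains therefore have identical transitions and initial state, giving $K_n^{(s)}\eqd D_n$. This is in fact the standard route to the result and is essentially the argument in \cite{Mittelstaedt19}; you have reproduced it cleanly.
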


In particular, $\Pr(D_n=k)=P_n(k)=\langle n,k\rangle/n!$ for $0\le k\le n-1$. We extend $P_n$ to a probability mass function on $\bZ$ by setting $P_n(k)=0$ for $k\notin\{0,1,\dots,n-1\}$, and define its right shift
\[
Q_n(k):=P_n(k-1),\qquad k\in\bZ.
\]
Recall that $\cz=\co$ if and only if $K_n^{(0)}=K_n^{(1)}+1$. By maximal coupling, the optimal success probability for coupling a random variable with law $P_n$ to one with law $Q_n$ is $1-\TV(P_n,Q_n)$, where
\[
\TV(P_n,Q_n):=\frac{1}{2}\sum_{k\in\bZ}\bigl|P_n(k)-Q_n(k)\bigr|
\]
is the total variation distance. Since $K_n^{(0)}\sim P_n$ and $K_n^{(1)}+1\sim Q_n$, there exists a coupling of $K_n^{(0)}$ and $K_n^{(1)}$ such that $K_n^{(0)}=K_n^{(1)}+1$ with probability at least $1-\TV(P_n,Q_n)$. Therefore it suffices to bound $\TV(P_n,Q_n)$. We use the following standard fact about the Eulerian distribution: the Eulerian numbers are log-concave in $k$ \cite{Stanley89}, and hence the sequence $\pnk$ is unimodal in $k$. From this, a short telescoping argument gives an exact identity.

\begin{lemma}\label{lem:comet}
If $(P(k))_{k\in\bZ}$ is unimodal and $Q(k)=P(k-1)$, then
\[
\TV(P,Q)=\max_kP(k).
\]
\end{lemma}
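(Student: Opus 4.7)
The plan is to unfold the absolute values in $\TV(P,Q)=\tfrac12\sum_k|P(k)-P(k-1)|$ using unimodality, then telescope. Let $k^\ast$ be a mode of $P$, so that $P(k^\ast)=\max_k P(k)=:m$. Unimodality means the sequence $(P(k))$ is non-decreasing for $k\le k^\ast$ and non-increasing for $k\ge k^\ast$, which translates directly into sign information about $P(k)-Q(k)=P(k)-P(k-1)$: this difference is non-negative for $k\le k^\ast$ and non-positive for $k>k^\ast$.

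Next I would split the sum accordingly and telescope each half. On $\{k\le k^\ast\}$,
\[
\sum_{k\le k^\ast}\bigl|P(k)-Q(k)\bigr|=\sum_{k\le k^\ast}\bigl(P(k)-P(k-1)\bigr)=P(k^\ast)-\lim_{k\to-\infty}P(k)=m,
\]
since $P$ is summable (being a probability mass function on $\bZ$) and hence tends to $0$ at $-\infty$. Symmetrically, on $\{k>k^\ast\}$,
\[
\sum_{k>k^\ast}\bigl|P(k)-Q(k)\bigr|=\sum_{k>k^\ast}\bigl(P(k-1)-P(k)\bigr)=P(k^\ast)-\lim_{k\to+\infty}P(k)=m.
\]
Adding these and multiplying by $\tfrac12$ yields $\TV(P,Q)=m=\max_k P(k)$, as required.

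There is no genuine obstacle here; the only minor care needed is that the telescoping boundary terms at $\pm\infty$ vanish, which is automatic for any probability mass function on $\bZ$. In the intended application, $P=P_n$ is supported on $\{0,1,\dots,n-1\}$, so the limits at $\pm\infty$ are literally zero and the argument becomes a finite telescoping sum. The log-concavity of the Eulerian numbers cited just before the lemma supplies the unimodality hypothesis, so the lemma will immediately reduce bounding $\TV(P_n,Q_n)$ to bounding the mode $\max_k P_n(k)$ of the Eulerian distribution.
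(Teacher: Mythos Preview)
Your proof is correct and follows essentially the same approach as the paper: use unimodality to determine the sign of $P(k)-Q(k)$ relative to the mode, then telescope. The paper compresses the two telescoping halves into the single identity $\sum_{k\le t}(P(k)-Q(k))=P(t)$ and uses $\tfrac12\sum_k|a_k|=\sum_k(a_k)_+$, but the underlying argument is identical to yours.
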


\begin{proof}
Write $a_k=P(k)-Q(k).$ Since $\sum_k a_k=0$ and the sign of $a_k$ is nonnegative up to the mode and negative thereafter,
\[
\TV(P,Q)=\frac{1}{2}\sum_k|a_k|=\sum_k(a_k)_+=\max_t\sum_{k\leq t}a_k=\max_tP(t),
\]
because $\sum_{k\leq t}a_k=P(t)$ by telescoping.
\end{proof}

Applying Lemma \ref{lem:comet} to $P_n$ and $Q_n$ yields
\[
\TV(P_n,Q_n)=\max_k\pnk.
\]
Thus the coupling problem reduces to bounding the maximal point mass of $D_n$.

Let $F_n(x):=\sum_{k\le \lfloor x\rfloor} P_n(k)$. Let $\Phi_n$ denote the CDF of $\mathcal N\left(\mu_n,\sigma_n^2\right)$ with
\[
\mu_n=\frac{n-1}{2},\quad \sigma_n^2=\frac{n+1}{12}.
\]
By Lemma~\ref{lem:comet}, \(\TV(P_n,Q_n)=\max_k P_n(k)\), so it remains to control the maximal atom of the Eulerian distribution. We obtain an \(O(n^{-1/2})\) bound on \(\max_k P_n(k)\) from a uniform normal approximation due to \"Ozdemir.
\begin{theorem}[Theorem 1.1 of \cite{Ozdemir22}]\label{thm:ozdemir}
\[
\lVert F_n - \Phi_n \rVert_{\infty}
   \;:=\;
   \sup_{x \in \mathbb{R}} \lvert F_n(x) - \Phi_n(x) \rvert
   \;\le\;
   C\, n^{-1/2}.
\]
\end{theorem}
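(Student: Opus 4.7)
The plan is to use a classical distributional representation of $D_n$ to reduce the uniform CDF bound to a standard Berry--Esseen estimate for a sum of i.i.d.\ bounded random variables. Specifically, recall \emph{Tanny's identity}: if $U_1,\ldots,U_n$ are i.i.d.\ Uniform$[0,1]$ and $S_n = U_1+\cdots+U_n$, then $D_n \eqd \lfloor S_n \rfloor$. This can be proved combinatorially, or via the hypersimplex volume interpretation of Eulerian numbers, or via exponential generating functions. From this identity one reads off
\[
F_n(x) \;=\; \bP\bigl(\lfloor S_n\rfloor \le \lfloor x\rfloor\bigr) \;=\; \bP\bigl(S_n < \lfloor x\rfloor + 1\bigr).
\]

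The next step is to apply the classical Berry--Esseen theorem to $S_n$. Since the summands $U_i$ have mean $1/2$, variance $1/12$, and finite third absolute moment, there is an absolute constant $C$ with
\[
\sup_{x \in \bR}\bigl|\bP(S_n \le x) - \Psi_n(x)\bigr| \;\le\; \frac{C}{\sqrt{n}},
\]
where $\Psi_n$ denotes the CDF of $\mathcal N(n/2, n/12)$. This already gives a uniform approximation of $F_n$ by a Gaussian CDF, though with slightly different parameters than the target $\Phi_n$.

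The final step is to reconcile the mismatches in normalization: $\Phi_n$ has mean $(n-1)/2$ and variance $(n+1)/12$, and the floor inside Tanny's identity shifts the argument by at most one. Since $\sup_x \Phi_n'(x) = O(n^{-1/2})$ (the Gaussian density is bounded by $1/(\sigma_n\sqrt{2\pi}) = O(n^{-1/2})$) and the same is true for $\Psi_n$, shifting the argument by an amount in $[0,1]$ and perturbing the mean and variance each by $O(1)$ contributes corrections of size $O(n^{-1/2})$, uniformly in $x$; a short computation with the standard bound $|\Phi_{\mu,\sigma^2}(x) - \Phi_{\mu',\sigma'^2}(x)| \lesssim \tfrac{1}{\sigma}(|\mu-\mu'| + |\sigma-\sigma'|)$ suffices. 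Summing the three $O(n^{-1/2})$ contributions yields the claim.

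The main obstacle is the \emph{uniformity} in $x$: an ordinary CLT for descents would suffice pointwise but not produce a rate. Tanny's identity is precisely what converts the problem to an i.i.d.\ Berry--Esseen, where uniformity is classical. An alternative route, bypassing Tanny, would be to apply a Stein- or dependency-graph Berry--Esseen bound directly to $D_n = \sum_{i=1}^{n-1}\ind[\sigma(i)>\sigma(i+1)]$, but this is more delicate because the descent indicators of a uniform random permutation are correlated through the global constraint on $\sigma$ and are not strictly $m$-dependent, so more careful bookkeeping would be required to recover the optimal rate $O(n^{-1/2})$.
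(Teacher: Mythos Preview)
The paper does not prove this statement: it is quoted verbatim as Theorem~1.1 of \cite{Ozdemir22} and used as a black box. So there is no in-paper argument to compare against, and your task reduces to whether your route actually delivers the bound.

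Your argument is correct. Tanny's identity $D_n\eqd\lfloor S_n\rfloor$ with $S_n=\sum_{i=1}^n U_i$ is classical, and from it $F_n(x)=\bP(S_n<\lfloor x\rfloor+1)$. The Berry--Esseen theorem for the i.i.d.\ bounded summands $U_i$ gives $\sup_t|\bP(S_n\le t)-\Psi_n(t)|\le C n^{-1/2}$ with $\Psi_n$ the CDF of $\mathcal N(n/2,n/12)$. The remaining discrepancies---the argument shift $|\lfloor x\rfloor+1-x|\le 1$, the mean shift $|\tfrac{n}{2}-\tfrac{n-1}{2}|=\tfrac12$, and the variance shift $|\tfrac{n}{12}-\tfrac{n+1}{12}|=\tfrac1{12}$---each contribute $O(n^{-1/2})$ uniformly in $x$ via the bounds you quote (the variance perturbation in fact gives $O(n^{-1})$, since $|\sigma-\sigma'|/\sigma=O(n^{-1})$). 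Summing yields the claim.

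One small correction to your closing remark: the descent indicators $X_i=\ind\{\sigma(i)>\sigma(i+1)\}$ of a uniformly random permutation \emph{are} $1$-dependent. For $|i-j|\ge 2$, the relative order within the pair $(\sigma(i),\sigma(i+1))$ is independent of that within $(\sigma(j),\sigma(j+1))$, since the relative order of any four positions is uniform on $S_4$. So a direct $m$-dependent Berry--Esseen bound (e.g.\ Chen--Shao) would also give the optimal $O(n^{-1/2})$ rate without Tanny's identity; it is not more delicate, just less elementary than your i.i.d.\ reduction.
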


\begin{lemma}\label{lem:someday}
\[
\max_k\pnk\leq \left(\sqrt{\frac{6}{\pi}}+2C\right)n^{-1/2}.
\]
\end{lemma}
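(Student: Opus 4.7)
The plan is to express $P_n(k)$ as a difference of CDF values and then insert the normal CDF as an intermediary, so that the two CDF approximation errors can be controlled by Theorem~\ref{thm:ozdemir} and the remaining term can be controlled by the maximum density of the Gaussian.

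Concretely, for any integer $k$, since $P_n$ is supported on integers,
\[
P_n(k) = F_n(k) - F_n(k-1).
\]
I would then add and subtract $\Phi_n(k) - \Phi_n(k-1)$ and use the triangle inequality:
\[
P_n(k) \le |F_n(k)-\Phi_n(k)| + \bigl(\Phi_n(k)-\Phi_n(k-1)\bigr) + |\Phi_n(k-1)-F_n(k-1)|.
\]
By Theorem~\ref{thm:ozdemir}, each of the first and third terms is at most $Cn^{-1/2}$, contributing $2Cn^{-1/2}$ in total.

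For the middle term, I would bound $\Phi_n(k)-\Phi_n(k-1) = \int_{k-1}^{k} \phi_n(x)\,dx$ by the supremum of the density of $\mathcal N(\mu_n,\sigma_n^2)$, which is
\[
\frac{1}{\sigma_n\sqrt{2\pi}} \;=\; \sqrt{\frac{12}{2\pi(n+1)}} \;=\; \sqrt{\frac{6}{\pi(n+1)}} \;\le\; \sqrt{\frac{6}{\pi}}\; n^{-1/2}.
\]
Combining the two estimates gives the claimed bound. There is no real obstacle here: the only mild subtlety is remembering that the Gaussian density's peak, not the variance alone, controls the integral over a unit interval, and that $\sigma_n^2$ scales linearly in $n$ so the peak is of order $n^{-1/2}$ — precisely the scale of the Özdemir error, which is why the two contributions combine cleanly into the stated constant.
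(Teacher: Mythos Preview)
Your proof is correct and essentially identical to the paper's: both write $P_n(k)=F_n(k)-F_n(k-1)$, insert $\Phi_n$ via the triangle inequality to pick up $2\lVert F_n-\Phi_n\rVert_\infty\le 2Cn^{-1/2}$, and bound $\Phi_n(k)-\Phi_n(k-1)$ by the Gaussian peak $\frac{1}{\sigma_n\sqrt{2\pi}}\le\sqrt{6/\pi}\,n^{-1/2}$.
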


\begin{proof}
Since $\pnk=F_n(k)-F_n(k-1)$, we obtain
\[
\pnk\leq (\Phi_n(k)-\Phi_n(k-1))+ 2\|F_n-\Phi_n\|_\infty
\leq \max_{t\in\bR}\left(\Phi_n(t)-\Phi_n(t-1)\right)+\frac{2C}{\sqrt n}.
\]
And
\[
\max_{t\in\bR}\left(\Phi_n(t)-\Phi_n(t-1)\right)\leq
\frac{1}{\sqrt{2\pi}\sigma_n}
=\sqrt{\frac{6}{\pi}}\cdot\frac{1}{\sqrt{n+1}}
\leq\sqrt{\frac{6}{\pi}}\cdot\frac{1}{\sqrt n}.
\]
Therefore
\[
\max_k\pnk\leq\left(\sqrt{\frac{6}{\pi}}+2C\right)n^{-1/2}.
\]
\end{proof}

Combining Lemmas \ref{lem:comet} and \ref{lem:someday}, and following the IDLA toppling sequence as laid out in Section \ref{IDLA:flatphase}, shows that the probabilities $\pss(\eta(i)=\s)$ and $\pss(\eta(i+1)=\s)$ are close:

\begin{lemma}\label{lem:skittle}
There exists $c > 0$ such that for all $n \in \N$ and all $i \in \bZ$,
\begin{equation*}
\bigl|\pss(\eta(i)=\s) - \pss(\eta(i+1)=\s)\bigr| \leq c\, n^{-1/2}.
\end{equation*}
\end{lemma}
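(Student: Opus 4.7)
The plan is to run the ARW started from $n\delta_0$ via the IDLA toppling sequence of Section~\ref{IDLA:flatphase}, condition on the random left endpoint $A$ of the flattened interval, and reduce the difference $\pss(\eta(i)=\s)-\pss(\eta(i+1)=\s)$ to a unit-shift total variation comparison of the form already controlled by the preceding lemmas.

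First I would establish a translation-invariance identity for the conditional law of $\eta$ given $A$. Conditional on $A=a$, the configuration at the end of the flattening phase equals $\ind_{[a,\,a+n-1]}$. By the sitewise strong Markov property for reading the stacks, the instructions at indices beyond the IDLA odometer are conditionally i.i.d.\ copies of the original stack distribution and independent of the first-phase data; consequently the subsequent stabilization has the same law as an ARW started from $\ind_{[a,\,a+n-1]}$ with a fresh i.i.d.\ family of stacks. Translation invariance of the stack law then identifies this law with the $a$-shift of the ARW started from $\ind_{[0,n-1]}$ with fresh stacks. Writing $h(m):=\bP(\eta^*(m)=\s)$ where $\eta^*$ denotes the stabilization of $\ind_{[0,n-1]}$ with fresh stacks, this yields
\[
\pss(\eta(i)=\s)=\bE\bigl[h(i-A)\bigr].
\]

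Subtracting the analogous identity at $i+1$,
\[
\pss(\eta(i)=\s)-\pss(\eta(i+1)=\s)=\bE\bigl[h(i-A)-h(i+1-A)\bigr].
\]
Since $h$ takes values in $[0,1]$, the right-hand side is bounded in absolute value by $2\,\TV(\mathrm{law}(i-A),\,\mathrm{law}(i+1-A))$. The two laws differ by a unit shift, and using $A=K_n^{(0)}-n+1$ with $K_n^{(0)}\sim P_n$ (Theorem~\ref{thm:eulerian}), a direct change of index identifies this total variation with $\TV(P_n,Q_n)$.

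Combining Lemmas~\ref{lem:comet} and~\ref{lem:someday} then bounds $\TV(P_n,Q_n)=\max_k P_n(k)\leq(\sqrt{6/\pi}+2C)n^{-1/2}$, giving the desired inequality with $c=2(\sqrt{6/\pi}+2C)$. The main subtlety is the sitewise strong Markov step: one must justify carefully that the IDLA odometer is a valid stopping time for the filtration generated by the sitewise stack reads, so that the tails of the stacks beyond the odometer are indeed fresh i.i.d.\ copies of the original law and translation invariance can be applied cleanly to the second phase of stabilization.
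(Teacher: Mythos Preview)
Your argument is correct and rests on the same ingredients as the paper's proof: the IDLA flattening phase, the fact that the unused portions of the stacks beyond the IDLA odometer are fresh i.i.d., translation invariance of the stack law, and the bound $\TV(P_n,Q_n)=\max_k P_n(k)\le c\,n^{-1/2}$ from Lemmas~\ref{lem:comet} and~\ref{lem:someday}. The presentation differs: the paper constructs an explicit coupling of two point-source ARW systems with sources $0$ and $1$, coupling their IDLA clusters via the maximal coupling of $K_n^{(0)}$ and $K_n^{(1)}+1$ and then identifying the remaining stacks on the event that the clusters coincide; you instead stay within a single system, condition on the left endpoint $A$ of the flattened interval, and write $\pss(\eta(i)=\s)=\bE[h(i-A)]$, reducing the difference at $i$ and $i+1$ to a unit-shift TV distance for the law of $A$. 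Your route is slightly more analytic and avoids building the two-system coupling, while the paper's explicit coupling has the advantage of being directly reusable for Proposition~\ref{cola} (shift invariance of subsequential limits). A minor point: for $h$ with values in $[0,1]$ one has $|\bE_\mu h-\bE_\nu h|\le \TV(\mu,\nu)$ rather than $2\,\TV$, so your constant can be halved, and the sitewise strong Markov step you flag as the main subtlety is exactly what the paper invokes (implicitly) when it couples ``the remaining (unused) portions of the instruction stacks.''
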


\begin{proof}
Applying Lemmas \ref{lem:comet} and \ref{lem:someday} gives
\[
\TV(P_n,Q_n)=\max_k\pnk\leq cn^{-1/2}
\]
for a global constant $c > 0$. By maximal coupling, there exists a coupling of $\cC_n^{(0)}$ and $\cC_n^{(1)}$ such that $\cC_n^{(0)}=\cC_n^{(1)}$ with probability at least $1-\TV(P_n,Q_n)$. This shows that we can couple $\cC_{n}^{(s)}$, $s \in \{0,1\}$, so that they are equal with probability at least $1-cn^{-1/2}$.

Now consider two instances of point-source ARW on $\bZ$ with $n$ particles, one having source $0$ and the other source $1$. We couple these two instances by applying the flattening procedure described in Section \ref{IDLA:flatphase} simultaneously to both instances. Specifically, for the IDLA phase, we use the coupling guaranteed by the previous paragraph. If the resulting IDLA clusters $\cC_n^{(s)}$, $s \in \{0,1\}$ are the same, then we couple the remaining (unused) portions of the instruction stacks to be identical and complete stabilization using these coupled stacks; otherwise, stabilize the two instances independently. In the former case, the final configurations are identical by construction. The result follows.
\end{proof}

As a corollary, we obtain that any subsequential local limit of the measures $\pss$ must be shift invariant. 

\begin{proof}[Proof of Proposition \ref{cola}]
Let $(\theta\eta)(x)=\eta(x-1)$ denote the shift operator on configurations, and for a measure $\mu$ write $\theta\mu$ for its pushforward under $\theta$. Let $\pss^{(1)}$ be the point-source law on $\{0,\s\}^{\bZ}$ for $n$ particles started at $1$; by translation invariance of the stacks, $\pss^{(1)}=\theta\,\pss$. The coupling constructed in the proof of Lemma~\ref{lem:skittle} gives $\TV(\pss,\pss^{(1)})\le cn^{-1/2}$, hence $\TV(\pss,\theta\,\pss)\le cn^{-1/2}$. Let $n_k\to\infty$ be a subsequence along which $\pss\rightarrow\alpha$. For any cylinder event $E$,
\[
|\pss(E)-\pss(\theta^{-1}E)|\le \TV(\pss,\theta\,\pss)\le cn^{-1/2},
\]
so evaluating at $n=n_k$ and letting $k\to\infty$ yields $\alpha(E)=\alpha(\theta^{-1}E)$. Thus $\alpha$ is shift invariant.
\end{proof}


\section{Block averages near the source}\label{sec:block}

In this section we show that the average particle density over any block of growing size in the bulk is asymptotically equal to the critical density $\crit$. Fix $\eps,\gamma \in (0,1)$. For each $n$, we choose a site $i=i(n)\in\bZ$ satisfying
\begin{equation}\label{clementine}
\lvert i(n)\rvert \leq \frac{1}{2}\left(1-\eps\right)n.
\end{equation}
We then consider the block
\begin{equation*}
    I_n=\ii{i-\lfloor n^\gamma\rfloor,\,i}.
\end{equation*}
The bulk condition \eqref{clementine} is chosen so that, with high probability, the one–dimensional IDLA cluster produced by $n$ particles at the origin contains the entire block $I_n$ and leaves exactly one particle at each site of $I_n$ (see Lemma \ref{lem:bots} below).

We evolve the point-source system using the toppling procedure described in Section \ref{IDLA:flatphase}. We now briefly recall the procedure and add some auxiliary notation.

\emph{Phase 1 (IDLA flattening).}  We follow the {\em{flattening}} toppling procedure described in Section \ref{IDLA:flatphase}. This produces a configuration $\idla$. 

\emph{Phase 2 (ARW stabilization).} Starting from $\idla$ we resume the full ARW dynamics on $\bZ$, by performing any legal, stabilizing toppling sequence.

During Phase 2 we define the boundary fluxes
\begin{align*}
    a^\ast &:= \#\{\text{$\Right$ instructions used at the site } i-\lfloor n^\gamma\rfloor-1 \text{ during Phase 2}\},\\
    b^\ast &:= \#\{\text{$\Left$ instructions used at the site } i+1 \text{ during Phase 2}\}.
\end{align*}
In other words, $a^\ast$ and $b^\ast$ are the total numbers of jumps entering $I_n$ from the left and from the right, respectively, during Phase 2. Let $D$ be the number of particles in $I_n$ in the final stabilized configuration of the full ARW on $\bZ$.

We will use the following auxiliary driven-dissipative process which is coupled to the evolution in phase 2. For any $a,b\ge 0$, we consider the following finite-volume system associated with $I_n$: run ARW on the interval $I_n$ with sinks at $i-\lfloor n^\gamma\rfloor-1$ and $i+1$, started from the restriction of $\idla$ to $I_n$ with $a$ additional active particles placed at $i-\lfloor n^\gamma\rfloor$ and $b$ additional active particles placed at $i$. We use the same instruction stacks at sites in $I_n$ as in the full system. Let $D_{a,b}$ denote the number of particles in $I_n$ when this finite-volume system stabilizes. Our first goal is to relate $D$ to $D_{a^\ast,b^\ast}$, so that the block density can be analyzed via a
driven-dissipative system.

\begin{lemma}\label{yoshi}
Fix $n$ and consider the two--phase construction above. Then, for every realization of the instruction stacks, we have
\[
D = D_{a^\ast,b^\ast}.
\]
\end{lemma}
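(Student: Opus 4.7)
The plan is to use the abelian property (Lemma~\ref{camellia}) to convert a single legal stabilizing toppling sequence of the full Phase~2 dynamics into one for the finite-volume driven-dissipative system on $I_n$. Fix a realization of the instruction stacks. Let $\kappa=(x_1,\dots,x_T)$ be any legal stabilizing toppling sequence that carries $\idla$ to $\eta^{(n)}$ on $\bZ$ during Phase~2, and let $\kappa'$ be the subsequence of $\kappa$ obtained by retaining only the topplings whose site lies in $I_n$. Write $\ell:=i-\lfloor n^\gamma\rfloor$ and $r:=i$ for the endpoints of $I_n$. I would show that $\kappa'$ is a legal stabilizing toppling sequence for the finite-volume ARW on $I_n$ started from $\idla|_{I_n}+a^\ast\delta_\ell+b^\ast\delta_r$, and that the resulting stabilization equals $\eta^{(n)}|_{I_n}$; the lemma then follows by counting particles.

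The key step is a comparison invariant between the two processes, run in lockstep: after the full system has executed the first $t$ steps of $\kappa$ and the finite system has executed the corresponding subsequence $\kappa'\cap\{x_1,\dots,x_t\}$, the two configurations agree at every interior site of $I_n$, while at $\ell$ (resp.\ $r$) the finite state equals the full state \emph{plus} $a^\ast-a(t)$ (resp.\ $b^\ast-b(t)$) extra active particles, where $a(t)$ and $b(t)$ count the Right instructions used at $\ell-1$ and the Left instructions used at $r+1$ in the first $t$ steps of $\kappa$. I would prove this by induction on $t$. Both systems draw the same instruction from the same stack at each site of $I_n$, because they record the same odometer there; Left and Right instructions trivially preserve the invariant, since particles that would cross $\ell-1$ or $r+1$ in the full system instead vanish into the sink in the finite system, and the corresponding inflows in the full system have been pre-loaded in the finite system. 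A Sleep instruction at $\ell$ or $r$ also preserves the invariant: when the deficit is zero the two systems act identically, and when the deficit is positive the instruction converts $1\mapsto\s$ in the full system while being voided in the finite system, which is consistent with the convention $\s+k=k+1$ for $k\ge 1$.

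Since the deficits $a^\ast-a(t)$ and $b^\ast-b(t)$ are always nonnegative, the finite-volume system carries at least as many active particles as the full system at every site of $I_n$, so each toppling in $\kappa'$ is legal for the finite-volume dynamics. At $t=T$ the full system is stable and $a(T)=a^\ast$, $b(T)=b^\ast$, so the invariant collapses to the statement that the finite and full configurations coincide on all of $I_n$; in particular, the finite configuration is $\eta^{(n)}|_{I_n}$, which is stable. Applying Lemma~\ref{camellia} inside $I_n$ identifies this as the unique outcome of the finite-volume stabilization, and the equality $D_{a^\ast,b^\ast}=D$ follows by summing particle masses.

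The main place that requires care is the treatment of Sleep instructions at the boundary sites $\ell$ and $r$, since during the process the finite system can carry several active particles at these sites while the full system has transiently reached $\s$; the invariant is only self-consistent when states are compared under the convention $\s+k=k+1$, and the clean ``finite = full everywhere on $I_n$'' identity is recovered only at the very end of $\kappa$, once the deficits have drained to zero. I do not expect any serious obstacle beyond this bookkeeping.
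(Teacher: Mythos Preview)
Your argument is correct, and it takes a genuinely different route from the paper's proof. The paper chooses a specific \emph{$I_n$-first} toppling order for Phase~2 (always topple inside $I_n$ if possible), records the sequence of times $\tau_1<\cdots<\tau_M$ at which particles enter $I_n$, and then expresses the final configuration on $I_n$ as $A_{s_M}\cdots A_{s_1}\bigl(\Stab_{I_n}(\idla|_{I_n})\bigr)$ using commuting addition operators $A_x(\eta)=\Stab_{I_n}(\eta+\delta_x)$. Commutativity collapses this product to $A_L^{a^\ast}A_R^{b^\ast}(\idla|_{I_n})$, which is exactly the finite-volume stabilization of $\idla|_{I_n}+a^\ast\delta_L+b^\ast\delta_R$.

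By contrast, you work with an \emph{arbitrary} legal stabilizing sequence $\kappa$ for Phase~2 and maintain a step-by-step invariant comparing the full system to the finite-volume system run along the restricted sequence $\kappa'$. Your approach is more elementary in that it never introduces the addition-operator formalism or appeals to their commutativity; the only abelian input is at the very end, to identify the outcome of $\kappa'$ with the canonical finite-volume stabilization. The price is the bookkeeping you flag: the invariant at the boundary sites must be phrased modulo the convention $\s+k=k+1$, and one must check (as you do) that a \texttt{Sleep} at $\ell$ or $r$ preserves it when the deficit is positive. The paper's approach sidesteps this subtlety entirely, since in the $I_n$-first order the block is always fully stabilized before the next particle enters, so no intermediate comparison of partially-stabilized states is needed. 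Both arguments are sound; yours makes the ``for every realization'' and ``for every legal $\kappa$'' character of the statement more explicit, while the paper's is algebraically cleaner.
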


\begin{proof}
Write $I_n=[L,R]$ where $L=i-\lfloor n^\gamma\rfloor$ and $R=i$. We now work with the instruction stacks as they stand at the beginning of Phase~2, after Phase~1 has already consumed some instructions.

Throughout this proof, for any configuration $\eta$ on $I_n$, $\Stab_{I_n}(\eta)$ denotes the stabilized configuration obtained by starting from $\eta$ and applying any legal, stabilizing toppling sequence that topples only sites in $I_n$ (equivalently, $L-1$ and $R+1$ act as sinks). By the abelian property (Lemma \ref{camellia}) restricted to $I_n$, $\Stab_{I_n}(\eta)$ is well defined and independent of the choice of legal stabilizing sequence in $I_n$. For $x\in I_n$, define the addition operator
\[
A_x(\eta) := \Stab_{I_n}(\eta+\delta_x).
\]
It follows from Lemma \ref{camellia} that for all $x,y\in I_n$ and all configurations $\eta$ on $I_n$,
\begin{equation}\label{winter}
A_xA_y = A_yA_x
\quad\text{and}\quad
A_x(\Stab_{I_n}(\eta)) = A_x(\eta),
\end{equation}
and more generally that for any $a,b\ge0$,
\begin{equation}\label{winds}
A_L^a A_R^b(\eta) = \Stab_{I_n}\bigl(\eta + a\delta_L + b\delta_R\bigr).
\end{equation}

Now consider Phase~2 of the full system on $\bZ$, started from $\idla$. We follow the following $I_n$-first toppling procedure: at each step
\begin{itemize}
    \item if there is an unstable site in $I_n$, topple one such site;
    \item otherwise, topple an unstable site in $\bZ\setminus I_n$.
\end{itemize}
This defines a legal stabilizing toppling sequence on $\bZ$, so by Lemma \ref{camellia} it produces the same final configuration and odometer as any other legal stabilizing sequence.

Let $(\eta_t)_{t\ge0}$ be the sequence of configurations during this $I_n$-first stabilization in Phase~2. Let
\[
\tau_1<\tau_2<\dots<\tau_M
\]
be the times $t$ at which a particle jumps into $I_n$ from outside. By definition of $a^\ast$ and $b^\ast$,
\[
M = a^\ast + b^\ast.
\]

Define configurations $\zeta_j$ on $I_n$ as follows. Let $\zeta_0$ be the restriction to $I_n$ of the first configuration $\eta_t$ for which every site in $I_n$ is stable. Hence,
\[
\zeta_0 = \Stab_{I_n}(\idla\vert_{I_n}).
\]
For $j\ge1$, at time $\tau_j$ a particle enters $I_n$ at some boundary site $s_j\in\{L,R\}$. Between the last time at which $I_n$ was stable (whose restriction is $\zeta_{j-1}$ by definition) and the instant $\tau_j$, we topple only outside $I_n$, so the configuration on $I_n$ just before $\tau_j$ is still $\zeta_{j-1}$. And immediately after the jump at time $\tau_j$ the configuration on $I_n$ is $\zeta_{j-1}+\delta_{s_j}$. After time $\tau_j$ we again topple only sites in $I_n$ until every site in $I_n$ is stable; let $\zeta_j$ be the resulting configuration on $I_n$. Then,
\[
\zeta_j = \Stab_{I_n}(\zeta_{j-1}+\delta_{s_j}) = A_{s_j}(\zeta_{j-1}),\qquad
j\ge1.
\]
Iterating, we obtain
\begin{equation}\label{sunshine}
\zeta_M = A_{s_M}\cdots A_{s_1}(\zeta_0).
\end{equation}
Among the entry sites $s_1,\dots,s_M$, the site $L$ appears exactly $a^\ast$ times and the site $R$ appears exactly $b^\ast$ times, so by the commutativity in \eqref{winter},
\[
A_{s_M}\cdots A_{s_1} = A_L^{a^\ast}A_R^{b^\ast}.
\]
Combining this with \eqref{sunshine}, the identity $\zeta_0=\Stab_{I_n}(\idla\vert_{I_n})$, and the relation $A_x(\Stab_{I_n}(\eta)) = A_x(\eta)$ from \eqref{winter}, we obtain
\[
\zeta_M
  = A_{s_M}\cdots A_{s_1}(\zeta_0)
  = A_L^{a^\ast}A_R^{b^\ast}\bigl(\zeta_0\bigr)
  = A_L^{a^\ast}A_R^{b^\ast}\bigl(\Stab_{I_n}(\idla\vert_{I_n})\bigr)
  = A_L^{a^\ast}A_R^{b^\ast}\bigl(\idla\vert_{I_n}\bigr).
\]

By construction of the $I_n$--first procedure, $\zeta_M$ is exactly the restriction to $I_n$ of the final configuration of the full ARW on $\bZ$ at the end of Phase~2, since after time $\tau_M$ there are no further jumps into $I_n$ and we never topple inside $I_n$ again once it is stable. Therefore $D$, the number of particles in $I_n$ in the final stabilized configuration of the full system, is exactly the number of particles in $\zeta_M$.

On the other hand, consider the finite-volume ARW on $I_n$ with sinks at $L-1$ and $R+1$, started from the configuration $\idla\vert_{I_n}+a^\ast\delta_L+b^\ast\delta_R$, using the same instruction stacks at sites in $I_n$ as at the beginning of Phase~2. By definition of $\Stab_{I_n}$ and by the identity \eqref{winds}, the final stabilized configuration of this finite system is
\[
\Stab_{I_n}\bigl(\idla\vert_{I_n}+a^\ast\delta_L+b^\ast\delta_R\bigr)
   = A_L^{a^\ast}A_R^{b^\ast}\bigl(\idla\vert_{I_n}\bigr)
   = \zeta_M.
\]
Thus the number of particles in $I_n$ in this finite-volume system, which is $D_{a^\ast,b^\ast}$ by definition, coincides with $D$.
\end{proof}

We now show that the average particle density in $I_n$ converges to $\crit$.

\begin{lemma}\label{lem:bots}
For any $\eps, \gamma \in (0,1)$ and integer $i$ satisfying \eqref{clementine},
we have
\begin{equation}\label{trombone}
\sum_{j=i-\lfloor n^\gamma \rfloor}^{i} \pss\bigl(\eta(j)=\s\bigr)
= (\crit+o(1))\, n^{\gamma}
\qquad \text{as } n \to \infty.
\end{equation}
\end{lemma}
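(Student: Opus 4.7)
The starting observation is that, since the final configuration is stable, $D = \sum_{j \in I_n} \mathbf{1}_{\{\eta(j) = \s\}}$, so $\bE[D]$ equals the sum on the left-hand side of \eqref{trombone}. Combined with Lemma~\ref{yoshi}, the goal reduces to showing $\bE[D_{a^\ast, b^\ast}] = (\crit + o(1))\, n^\gamma$.

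My plan combines three ingredients: (i) the IDLA cluster $\cC_n^{(0)}$ contains $I_n$ with probability $1 - o(1)$, so $\idla\vert_{I_n}$ has a single active particle at every site; (ii) the boundary fluxes $a^\ast$, $b^\ast$ into $I_n$ during Phase~2 are both at least some $\tilde c\, n^\gamma$ with probability $1 - o(1)$; and (iii) conditional on (i)--(ii), a driven-dissipative concentration result pins $D_{a^\ast, b^\ast}$ close to $\crit \cdot n^\gamma$.

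For (i), one uses Theorem~\ref{thm:eulerian}: since $K_n^{(0)}$ has mean $\tfrac{n-1}{2}$ and variance $O(n)$, standard concentration together with the bulk condition \eqref{clementine} gives $I_n \subset \cC_n^{(0)}$ with probability $1 - o(1)$. For (iii), abelianness (Lemma~\ref{camellia}) shows that $D_{a, b}$ equals the number of particles remaining in $I_n$ after performing $a$ driven-dissipative additions at the left endpoint of $I_n$ and $b$ at the right endpoint, starting from $\idla\vert_{I_n}$. By the HJJ identity $\rhoDD = \crit$ combined with appropriate mixing and concentration bounds for this finite-volume chain, one expects $\bP(|D_{a, b}/n^\gamma - \crit| > \delta) = o(1)$ uniformly for $a, b \geq \tilde c\, n^\gamma$.

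The main obstacle is (ii), the flux lower bound. Heuristically, after Phase~1 there are $\Theta(n)$ active particles on each side of $I_n$ within $\cC_n^{(0)}$, and stabilizing the portion of the ARW outside $I_n$ must induce many jumps across each boundary of $I_n$. Making this rigorous likely requires a mass-balance argument combining HJJ-style density estimates on a slightly larger interval containing $I_n$ with standard ARW flux identities, and is where the machinery of \cite{HoffmanJohnsonJunge24} will enter in an essential way. Once (i)--(iii) are established, the trivial bound $D \leq |I_n| = \lfloor n^\gamma \rfloor + 1$ on the low-probability complementary event yields $\bE[D] = (\crit + o(1))\, n^\gamma$, completing the proof.
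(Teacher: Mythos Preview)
Your skeleton---reduce to $\bE[D_{a^\ast,b^\ast}]$ via Lemma~\ref{yoshi}, control the IDLA phase, then invoke a driven--dissipative concentration estimate---matches the paper's. The divergence is in how you link $D_{a,b}$ to the stationary distribution.

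You set up (iii) as a mixing argument: run enough driven--dissipative additions to forget the starting state, which forces you to establish the flux \emph{lower} bound (ii). That is precisely the step you flag as the main obstacle, and it is a real one: a short argument for $a^\ast,b^\ast\ge \tilde c\,n^\gamma$ with high probability is not obvious, and you would additionally need a quantitative mixing bound for the driven--dissipative chain on $I_n$, which is not provided by \cite{HoffmanJohnsonJunge24}.

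The paper bypasses (ii) entirely by invoking the exact-sampling result of \cite{levine2021exact}. On the good event where $\idla|_{I_n}$ has exactly one active particle at every site of $I_n$, stabilizing this configuration in $I_n$ (with sinks at the endpoints) already yields \emph{exactly} a sample from the driven--dissipative stationary law; adding any further $a,b\ge 0$ particles at the boundaries and restabilizing keeps you in stationarity. Hence $D_{a,b}$ has the stationary law for \emph{every} $a,b\ge 0$, so Propositions~8.5--8.6 of \cite{HoffmanJohnsonJunge24} give the concentration bound uniformly in $a,b$, with no mixing or flux lower bound needed. What the paper does need is an easy \emph{upper} bound $a^\ast,b^\ast\le n^5$ with high probability, so that a union bound over the at most $n^{10}$ pairs $(a,b)$ beats the exponential concentration. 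In short, the missing idea is the exact-sampling input, which converts your hard step into a trivial one.
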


\begin{proof}
In the final configuration each site contains at most one sleeping particle. So by linearity of expectation
$$
\bE[D]=\sum_{j\in I_n}\pss\bigl(\eta(j)=\s\bigr).
$$
Therefore it suffices to show $\bE[D]=\left(\crit+o(1)\right)n^\gamma.$ We first perform Phase 1 as outlined at the beginning of this section. Let $A_1$ be the event that, after this phase, there is not exactly one active particle at each site of $I_n$. By Theorem 1 of \cite{Mittelstaedt19} we can deduce that there exist constants $c_1,C_1>0$ such that
\begin{equation}\label{practice}
    \bP(A_1)\leq C_1e^{-c_1n^{.5}}.
\end{equation}
On $A_1^c$ the IDLA configuration $\idla$ has exactly one active particle at
each $j\in I_n$.

Next, we run Phase 2. By Lemma \ref{yoshi} we have $D=D_{a^\ast,b^\ast}$. For each fixed $a,b\geq 0$, consider the finite-volume ARW on $I_n$ with sinks at $i-\lfloor n^\gamma\rfloor-1$ and $i+1$. In this system we start from the restriction of $\idla$ to $I_n$ (where each site $j\in I_n$ contains one active particle), then add $a$ additional active particles at the left boundary site $i-\lfloor n^\gamma\rfloor$ and $b$ additional active particles at the right boundary site $i$, and then stabilize. Theorem 2.1 of \cite{levine2021exact} implies that the law of the resulting stabilized configuration is the stationary distribution of the driven-dissipative ARW on $I_n$ with these sinks and boundary driving. Fix $\delta>0$, and for integers $a$ and $b$ with $a,b\ge 0$, define the deviation event
$$
A_{2,a,b}:=\left\{D_{a,b}\not\in\left(n^\gamma\crit\left(1-\delta/2\right),\,n^\gamma\crit\left(1+\delta/2\right)\right)\right\}.
$$
By Proposition 8.5 and 8.6 of \cite{HoffmanJohnsonJunge24}, we know there exist constants $c_2,C_2>0$ such that for all $a,b\geq 0$ and all sufficiently large $n$,
$$
\bP(A_{2,a,b})\leq C_2 e^{-c_2n^\gamma}.
$$
Next we control the size of the boundary fluxes. By the same argument as Lemma 3.5 of \cite{BrownHoffmanSon24} there exist constants $c_3, C_3>0$ such that
$$
\bP(a^\ast >n^5 \text{ or } b^\ast>n^5)\leq C_3 e^{-c_3n}.
$$
We now define a good event. Let $A$ be the event that
\begin{enumerate}
    \item $A_1^c$ occurs,
    \item $\Bigl(\bigcup_{a,b\in\ii{0,n^5}} A_{2,a,b}\Bigr)^c$ occurs, and
    \item $a^\ast$ and $b^\ast$ are between $0$ and $n^5$.
\end{enumerate}
By a union bound with the fact that there are at most $n^{10}$ pairs $(a,b)$, we know that there exist constants $c,C,\beta>0$ such that
$$
\bP(A^c)\leq Cn^{10}e^{-cn^\beta}.
$$
Also note that $0\leq D\leq n$ and on $A$
$$
D=D_{a^\ast,b^\ast}\in\left(n^\gamma\crit\left(1-\delta/2\right),\, n^\gamma \crit\left(1+\delta/2\right)\right).
$$
Now we are ready to bound $\bE[D]$. The lower bound is
$$
\bE[D]\geq\bE[D \ind_A] \geq \bP(A)\,n^\gamma\crit\left(1-\delta/2\right).
$$
The upper bound is
$$
\bE[D]\leq \bP(A)\,n^\gamma\crit\left(1+\delta/2\right) +\bP(A^c)\,n\leq n^\gamma\crit\left(1+\delta\right).
$$
Combining these two equations above we get that
$$
\bE[D]=n^\gamma\crit\left(1+o(1)\right)
$$
as desired.
\end{proof}


\section{Completing the Proof}\label{sec:complete}

\begin{proof}[Proof of Theorem \ref{thm:key}]
By Lemma \ref{lem:skittle}, for any $j \in \bZ$ we have
\begin{equation*}
  \bigl|\pss(\eta(j)=\s) - \pss(\eta(j+1)=\s)\bigr| \leq c\, n^{-1/2}.
\end{equation*}
By the triangle inequality this implies that for all $i,j\in\bZ$,
\begin{equation}\label{eq:golf}
  \bigl|\pss(\eta(i)=\s) - \pss(\eta(i-j)=\s)\bigr| \leq c\, j\, n^{-1/2}.
\end{equation}
Summing over $j \in [0, \lfloor n^{\gamma}\rfloor] \cap \bZ$ and using
\eqref{eq:golf}, we obtain
\begin{equation*}
  \left|
    \sum_{j=i-\lfloor n^{\gamma} \rfloor}^{i} \pss(\eta(j)=\s)
      - \lfloor n^{\gamma} \rfloor \pss(\eta(i)=\s)
  \right|
  \leq \sum_{j=0}^{\lfloor n^{\gamma} \rfloor} c\, j\, n^{-1/2}
  \leq c'\, n^{2\gamma - 1/2}.
\end{equation*}
Now take $i=i(n)$ as in the statement of the theorem. By Lemma~\ref{lem:bots} with the same $\eps$ and $\gamma$, we have
\[
  \sum_{j=i-\lfloor n^{\gamma} \rfloor}^{i} \pss(\eta(j)=\s)
    = (\crit+o(1))\,n^{\gamma}.
\]
Combining this with the previous inequality and dividing by $n^{\gamma}$ gives
\[
  \bigl|\pss(\eta(i)=\s) - \crit\bigr|
    \le c\, n^{\gamma-1/2} + o(1),
\]
which tends to $0$ as $n\to\infty$ by taking any $0<\gamma<1/2$. This proves the claim.
\end{proof}

\begin{remark}
Note that the source of the $o(1)$ error in Theorem \ref{thm:key} is the proof of Lemma \ref{lem:bots}, and ultimately Lemmas 8.5 and 8.6 of \cite{HoffmanJohnsonJunge24}, where $\delta$ depends only implicitly on $n$. Thus, our method does not give a quantitative estimate for the error term. 
\end{remark}


\section{Future Directions}\label{sec:bladerunner}

Our results verify the one–site part of Conjecture~3 of \cite{LevineSilvestriUniversality24} in the one–dimensional point–source model, conditional on the existence of the full microscopic limit: if the point–source  measures converge locally, then the limiting measure is shift invariant and has density $\crit$. A natural next step would be to extend our methods to multi–site events, for example to show that the joint law of the sleeping indicator field $(\mathbf 1_{S_i})_{i\in\bZ}$ converges on finite windows, and to relate its covariance structure to the hyperuniformity conjecture in~\cite{LevineSilvestriUniversality24}.

Two further directions concern changing either the geometry or the driving mechanism. First, it is very plausible that an analogue of Theorem~\ref{thm:key} should hold in two and higher dimensions for the point–source model on $\bZ^d$. Our proof suggests that this would follow from an IDLA statement about the harmonic measure on the boundary of a large ball: namely, that the hitting distribution is essentially unchanged when the source is moved from the origin to a neighbor of the origin. Second, one expects a local density theorem of the same form to hold for the driven–dissipative ARW on large finite intervals. Our attempted coupling breaks down in this setting because particles can fall into the sinks at different times in the two coupled systems, so new ideas seem necessary. Establishing the analogues of Theorem~\ref{thm:key} and Corollary~\ref{cola} in these settings would give further evidence for universality of the limit measure.


\section{Acknowledgements}

J.R. was partially supported by a Simons Foundation Targeted Grant awarded to the HUN-REN Alfréd Rényi Institute of Mathematics when this research was conducted. He is currently supported by the Hungarian National Research, Development and Innovation Office, grant 152849. C.H. and H.S. were partially supported by NSF grant DMS-1954059. C.H. was also partially supported by NSF grant DMS- 2503778.


\bibliographystyle{amsalpha}
\bibliography{main}

\end{document}